\newcommand\version{June 14, 2012}
\newtheorem{theorem}{Theorem}[section]
\newtheorem{lemma}[theorem]{Lemma}
\newtheorem{corollary}[theorem]{Corollary}
\theoremstyle{definition}
\newtheorem{example}[theorem]{Example}
\theoremstyle{remark}
\numberwithin{equation}{section}
\newcommand{\B}{\mathfrak{B}}
\newcommand{\C}{\mathbb{C}}
\newcommand{\const}{\mathrm{const}\ }
\renewcommand{\epsilon}{\varepsilon}
\newcommand{\loc}{{\rm loc}}
\newcommand{\R}{\mathbb{R}}
\newcommand{\Sph}{\mathbb{S}}
\renewcommand{\S}{\mathfrak{S}}
\renewcommand{\sc}{\mathrm{sc}}
\DeclareMathOperator{\tr}{Tr}
\begin{document}

\title[Cwikel's theorem --- \version]{Cwikel's theorem and the CLR inequality}

\author{Rupert L. Frank}
\address{Rupert L. Frank, Department of Mathematics, Princeton University, Princeton, NJ 08544, USA}
\email{rlfrank@math.princeton.edu}

\thanks{\copyright\, 2012 by the author. This paper may be reproduced, in its entirety, for non-commercial purposes.\\
U.S. National Science Foundation grant PHY-1068285 is acknowledged. The author is grateful to A.~Laptev, M. Lewin, E. Lieb, R. Seiringer and T. Weidl for helpful discussions.}

\begin{abstract}
We give a short proof of the Cwikel--Lieb--Rozenblum (CLR) bound on the number of negative eigenvalues of Schr\"odinger operators. The argument, which is based on work of Rumin, leads to remarkably good constants and applies to the case of operator-valued potentials as well. Moreover, we obtain the general form of Cwikel's estimate about the singular values of operators of the form $f(X) g(-i\nabla)$.
\end{abstract}

\maketitle


\section{Introduction}

Among the most beautiful theorems in spectral theory is Cwikel's result about trace ideal properties of operators on $L_2(\R^d)$ of the form $f(X)g(-i\nabla)$. Here $f(X)$ denotes multiplication by the function $f$ in position space and $g(-i\nabla)$ denotes multiplication by $g$ in momentum space. Cwikel's theorem says that $f\in L_q(\R^d)$ and $g\in L_{q,w}(\R^d)$ implies $f(X) g(-i\nabla)\in \mathfrak{S}_{q,w}(L_2(\R^d))$ for $q>2$. (We recall the definition of weak $L_q$ and weak $\mathfrak S_q$ spaces below.) This was conjectured by Simon in \cite{Si} and proved by Cwikel in \cite{Cw}; see also the review \cite{BiKaSo} for some extensions of this result.

An immediate consequence of Cwikel's theorem is the famous Cwikel--Lieb--Rozen\-blum bound on the number $N(0,-\Delta+V)$ of negative eigenvalues (counting multiplicities) of Schr\"odinger operators $-\Delta+V$ in $L_2(\R^d)$, $d\geq 3$, namely,
\begin{equation}
\label{eq:clr}
N(0,-\Delta+V) \leq \const \int_{\R^d} V(x)_-^{d/2} \,dx \,.
\end{equation}
Here $V(x)_-=\max\{-V(x),0\}$ denotes the negative part. The meaning of this bound is that the semi-classical approximation,
$$
\iint_{\R^d\times\R^d} \chi_{\{ p^2 + V(x)<0 \}} \frac{dx\,dp}{(2\pi)^d} = (2\pi)^{-d} |\{ p\in\R^d:\ |p|<1\}| \int_{\R^d} V(x)_-^{d/2} \,dx \,,
$$
is, indeed, a uniform upper bound on $N(0,-\Delta+V)$ up to a universal constant depending only on the dimension. Different proofs of \eqref{eq:clr} were given in \cite{Ro,Li,Fe,LiYa,Co}; see also the reviews \cite{LaWe2,H2}.

One of our goals here is to provide a new and simple proof of Cwikel's theorem and the CLR inequality. Our starting point is the remarkable paper \cite{Ru1} by Rumin which contains, among others, the inequality
\begin{equation}
\label{eq:ruminrd}
\tr\gamma^{1/2}(-\Delta)\gamma^{1/2} \geq \const \int_{\R^d} \gamma(x,x)^{d/(d-2)} \,dx
\end{equation}
for operators $0\leq\gamma\leq (-\Delta)^{-1}$ on $L_2(\R^d)$, $d\geq 3$. As we shall see, this is a very powerful inequality (for instance, for $\gamma$ of rank one, it reduces to Sobolev's inequality). Surprisingly, its proof is elementary and uses not much more than the triangle inequality for the Hilbert--Schmidt norm. It also yields a rather good value for the constant. In this paper we shall derive the CLR inequality \eqref{eq:clr} from \eqref{eq:ruminrd} and we shall extend \eqref{eq:ruminrd} to $L_2(\R^d)\otimes\mathcal G$ with constants independent of the dimension of the auxiliary Hilbert space $\mathcal G$. Both results are new and go beyond \cite{Ru1,Ru}. Our results in the operator-valued case improve upon previous results of \cite{H1} (who follows \cite{Cw} and has larger constants) and \cite{FrLiSe1} (who can only deal with $(-\Delta)^s$ for $0<s\leq 1$). Moreover, we show that a modification of Rumin's proof of \eqref{eq:ruminrd} yields an easy proof of Cwikel's theorem mentioned at the beginning. This is the topic of Section \ref{sec:cwik}.

Besides its simplicity and its good constants, another advantage of Rumin's inequality \eqref{eq:ruminrd} is that it is not limited to the Laplacian (or its powers) on $\R^d$, but has extensions to a large class of abstract operators. Roughly speaking, the only assumption is the existence of a density of states, and the energy dependence of this density of states determines the way in which $\gamma(x,x)$ enters the right side of \eqref{eq:ruminrd}. This generality of \cite{Ru1,Ru} was of crucial importance for the results in \cite{FrOl,FrLeLiSe}. In this paper we do not aim at highest possible generality, but we do include a new theorem about operators $T$ on arbitrary measure spaces $X$. We prove that a diagonal heat kernel bound $\exp(-tT)(x,x) \leq C t^{-\nu/2}$ with $\nu>2$ implies a CLR inequality $N(0,T+V) \leq C_\nu' C \int_X V_-^{\nu/2} \,dx$; see Theorem \ref{cwikelgen}. This improves earlier results in \cite{LeSo,FrLiSe2} who needed the additional assumption that $\exp(-tT)$ is positivity preserving.

In addition to deriving the CLR inequality \eqref{eq:clr} from \eqref{eq:ruminrd} we are able to answer the following conceptual question about \eqref{eq:ruminrd}. Namely, besides the new inequality \eqref{eq:ruminrd} Rumin's papers \cite{Ru1,Ru} contain a new proof of the inequality
\begin{equation}
\label{eq:ltdens}
\tr\gamma^{1/2}(-\Delta)\gamma^{1/2} \geq \const \int_{\R^d} \gamma(x,x)^{(d+2)/d} \,dx
\end{equation}
for operators $\gamma$ on $L_2(\R^d)$ satisfying $0\leq\gamma\leq 1$. Inequality \eqref{eq:ltdens} is due to Lieb and Thirring \cite{LiTh} and plays an important role in their proof of stability of matter. It is well known that \eqref{eq:ltdens} is equivalent to an inequality about eigenvalue sums of Schr\"odinger operators, namely,
\begin{equation}
\label{eq:ltev}
\tr\left(-\Delta+V\right)_- \leq \const \int_{\R^d} V(x)_-^{(d+2)/2} \,dx \,.
\end{equation}
By `equivalence' we mean that there is a duality principle between \eqref{eq:ltdens} and \eqref{eq:ltev} and that the optimal constant for one inequality determines that for the other inequality. Given the striking similarity between \eqref{eq:ruminrd} and \eqref{eq:ltdens} it is natural to ask whether there is an inequality for Schr\"odinger operators which is equivalent to \eqref{eq:ruminrd}. We are able to answer this question completely (Lemma \ref{dual}) and see that \eqref{eq:ruminrd} is `essentially' equivalent to the CLR inequality. More precisely, we prove that \eqref{eq:ruminrd} is equivalent to a bound on the Birman--Schwinger operator $(-\Delta)^{-1/2} V_- (-\Delta)^{-1/2}$ in the weak trace ideal $\mathfrak S_{d/2,w}(L_2(\R^d))$, however, not with its standard (quasi-)norm but with an equivalent expression (Lemma \ref{equiv}).

For the impatient reader who wants to see immediately and without going through various dualities how \eqref{eq:ruminrd} implies the CLR bound \eqref{eq:clr} we finish this introduction with a short derivation of \eqref{eq:clr}. For fixed $\epsilon>0$ we know that the spectrum of $-\Delta+V$ in the interval $(-\infty,-\epsilon)$ is finite if $V_-\in L_{d/2}(\R^d)$. Let $\psi_1,\ldots,\psi_N$ be linearly independent functions which span the corresponding spectral subspace. Our goal will be to prove an upper bound on $N$ in terms of $V$, independently of $\epsilon$. We may assume that the functions are normalized so that $\langle\sqrt{-\Delta}\psi_j,\sqrt{-\Delta}\psi_k\rangle =\delta_{jk}$. Note that with this normalization, the $\psi_j$'s are linear combinations of eigenfunctions but, in general, not eigenfunctions. Since they span the spectral subspace of $-\Delta+V$ corresponding to $(-\infty,-\epsilon)$ we know, however, that $\gamma=\sum_j |\psi_j\rangle\langle\psi_j|$ satisfies
\begin{equation}
\label{eq:clrproof1}
0 \geq \tr\gamma^{1/2}(-\Delta+V)\gamma^{1/2} \,.
\end{equation}
Because of the normalization of the $\psi_j$'s we also know that
$$
\sqrt{-\Delta}\gamma\sqrt{-\Delta} \leq 1
\qquad\text{and}\qquad
\tr\gamma^{1/2}(-\Delta)\gamma^{1/2} = N \,.
$$
Thus, we infer from \eqref{eq:ruminrd} that
$$
N \geq K_d \int_{\R^d} \gamma(x,x)^{d/(d-2)} \,dx
$$
for some constant $K_d$ and, therefore, that
\begin{align*}
\tr V\gamma & \geq - \int_{\R^d} V(x)_- \gamma(x,x)\,dx 
\geq - \left( \int_{\R^d} V_-^{d/2} \,dx \right)^{2/d} \left( \int_{\R^d} \gamma(x,x)^{d/(d-2)} \,dx \right)^{(d-2)/d} \\
& \geq - N^{(d-2)/d} K_d^{-(d-2)/d} \left( \int_{\R^d} V_-^{d/2} \,dx \right)^{2/d} \,.
\end{align*}
We insert this bound into \eqref{eq:clrproof1} and get
$$
0 \geq \tr\gamma^{1/2}(-\Delta+V)\gamma^{1/2} = N + \tr V\gamma
\geq N - N^{(d-2)/d} K_d^{-(d-2)/d} \left( \int_{\R^d} V_-^{d/2} \,dx \right)^{2/d} \,.
$$
Thus,
$$
N \leq K_d^{-(d-2)/2} \int_{\R^d} V_-^{d/2} \,dx \,,
$$
independently of $\epsilon$, which proves \eqref{eq:clr}.


\section{Cwikel's theorem}\label{sec:cwik}

To state our main result we recall that $L_{p,w}(\R^d)$ denotes the space of functions $a$ on $\R^d$ for which the (quasi-)norm
$$
\| a\|_{p,w}^{p} = \sup_{\tau>0} \tau^{p}\, |\{ |a|>\tau \} |
$$
is finite. We shall prove

\begin{theorem}
\label{main}
Let $0\leq a\in L_{p,w}(\R^d)$ and $0\leq b\in L_p(\R^d)$ for some $p>1$. Then for all $\mu>0$
$$
\tr\left( a(-i\nabla)^{1/2} b(X) a(-i\nabla)^{1/2} - \mu\right)_+  \leq \mu^{-p+1} \ \frac{(p+1)^{p-1}}{(p-1)^p} \ (2\pi)^{-d} \ \| a\|_{p,w}^{p} \| b\|_p^p \,.
$$
\end{theorem}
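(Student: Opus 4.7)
The plan is to modify Rumin's layer-cake proof of \eqref{eq:ruminrd} so as to bound $\tr(T-\mu)_+$ from above rather than $\tr\gamma^{1/2}(-\Delta)\gamma^{1/2}$ from below. Recasting the estimate through the counting function $n(\lambda,T):=\#\{j:\lambda_j(T)>\lambda\}$ and the identity $\tr(T-\mu)_+=\int_\mu^\infty n(\lambda,T)\,d\lambda$, it suffices to prove a Cwikel-type bound $n(\lambda,T)\leq ((p+1)/(p-1))^{p-1}(2\pi)^{-d}\|a\|_{p,w}^p\|b\|_p^p\,\lambda^{-p}$ for each $\lambda>0$; the factor $1/(p-1)$ contributed by $\int_\mu^\infty\lambda^{-p}\,d\lambda$ then combines with this to give the stated constant $(p+1)^{p-1}/(p-1)^p$.

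To bound $n(\lambda,T)$, I would write $T=C^*C$ with $C=b(X)^{1/2}a(-i\nabla)^{1/2}$, and for a threshold $\tau=\tau(\lambda)$ split $a=a_\tau+A_\tau$ with $a_\tau:=a\chi_{\{a\leq\tau\}}$. Since $a_\tau$ and $A_\tau$ have disjoint supports as functions of $\xi$, one has $a(-i\nabla)^{1/2}=a_\tau^{1/2}(-i\nabla)+A_\tau^{1/2}(-i\nabla)$ with orthogonal ranges, and correspondingly $C=C_\tau^<+C_\tau^>$. The small piece $C_\tau^<$ has operator norm of order $\sqrt\tau$ (times a factor involving $b$), while the big piece $C_\tau^>$ admits a Hilbert--Schmidt-type bound via the level-set estimate $|\{a>\tau\}|\leq\|a\|_{p,w}^p\tau^{-p}$. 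Ky Fan's inequality $s_n(C)\leq\|C_\tau^<\|_{\mathrm{op}}+s_n(C_\tau^>)$ combined with Chebyshev's estimate $n(t,K)\leq t^{-2}\|K\|_{\mathrm{HS}}^2$ then yields a counting bound, provided $\tau$ is chosen so that $\|C_\tau^<\|_{\mathrm{op}}$ is a controlled fraction of $\sqrt\lambda$.

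The main obstacle is that the naive estimate $\|C_\tau^<\|_{\mathrm{op}}\leq\sqrt\tau\,\|b^{1/2}\|_{\mathrm{op}}$ is useless when $b\notin L_\infty$. I would address this by simultaneously layer-cake decomposing $b=b_\sigma+B_\sigma$, so that $C$ splits into four pieces $C_{ij}$. The piece with bounded operator norm is $C_{11}=b_\sigma^{1/2}a_\tau^{1/2}$, with $\|C_{11}\|_{\mathrm{op}}\leq\sqrt{\tau\sigma}$. The piece $C_{22}=B_\sigma^{1/2}A_\tau^{1/2}$ with both factors "large" is Hilbert--Schmidt with norm squared bounded by $(2\pi)^{-d}(p/(p-1))^2\|a\|_{p,w}^p\|b\|_p^p(\tau\sigma)^{1-p}$, using the layer-cake estimates $\int a\chi_{\{a>\tau\}}\leq\frac{p}{p-1}\|a\|_{p,w}^p\tau^{1-p}$ and similarly for $b$. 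The mixed pieces $C_{12}$ and $C_{21}$ are the hard part: their naive Hilbert--Schmidt norms involve $\|a_\tau\|_1$ or $\|b_\sigma\|_1$, which can diverge for $L_{p,w}/L_p$ inputs. Controlling them requires more care, e.g.\ using Schatten norms $S_{2q}$ with $1<q<p$ via the Araki--Lieb--Thirring inequality, or an integrated/restructured decomposition that sidesteps these divergent $L_1$ quantities.

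Balancing $\tau\sigma$ against $\lambda$ and optimizing the allocation of the "size budget" between the $\tau$ and $\sigma$ parameters yields the sharp constant $((p+1)/(p-1))^{p-1}$ in the counting-function estimate. Integrating over $\lambda\in(\mu,\infty)$ produces the final factor $\mu^{-p+1}(p+1)^{p-1}/(p-1)^p$ in the statement of the theorem.
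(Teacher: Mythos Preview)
Your proposal takes a genuinely different route from the paper and, as you yourself flag, contains an unresolved gap.

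The paper does \emph{not} bound $n(\lambda,T)$ first and then integrate. Instead it proves the theorem via a duality principle (Lemma~\ref{dual}): the bound
\[
\tr\bigl(a(-i\nabla)^{1/2}b(X)a(-i\nabla)^{1/2}-\mu\bigr)_+ \le D\,\mu^{-p+1}\|b\|_p^p
\]
for all $b\ge 0$ is equivalent, with an explicit relation between constants, to Rumin's inequality
\[
\tr\gamma^{1/2}a(-i\nabla)^{-1}\gamma^{1/2}\ge K\int_{\R^d}\gamma(x,x)^{p'}\,dx
\qquad\text{for }0\le\gamma\le a(-i\nabla).
\]
The latter is proved directly (Lemma~\ref{rumin}) by writing $\tr\gamma^{1/2}a(-i\nabla)^{-1}\gamma^{1/2}=\int_0^\infty\tr\gamma_\tau\,\tau^{-2}\,d\tau$ with $\gamma_\tau=\chi_{(0,\tau]}(a(-i\nabla))\gamma\chi_{(0,\tau]}(a(-i\nabla))$, and then obtaining a \emph{pointwise lower bound} on $\rho_\tau(x)=\gamma_\tau(x,x)$ via the triangle inequality for the Hilbert--Schmidt norm applied to $\gamma^{1/2}\chi_\Omega$. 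Crucially, only $a$ is split at level $\tau$; $b$ never enters because one works on the dual side. This completely sidesteps the mixed-term problem you encounter.

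Your gap is real: the pieces $C_{12}$ and $C_{21}$ involve $\int b\chi_{\{b\le\sigma\}}$ or $\int a\chi_{\{a\le\tau\}}$, which are not controlled by $\|b\|_p$ or $\|a\|_{p,w}$. You acknowledge this and gesture at ``Schatten norms $\S_{2q}$'' or ``an integrated/restructured decomposition'', but give no actual mechanism. Moreover, the constant you aim for in the counting bound, $((p+1)/(p-1))^{p-1}$, is \emph{strictly better} than what the theorem itself implies for $n(\lambda,T)$ via $n(\lambda,T)\le(\lambda-\mu)^{-1}\tr(T-\mu)_+$ optimized over $\mu$; and it is much better than what Cwikel's original Ky~Fan/Chebyshev argument yields (the paper notes after Theorem~\ref{cwikelop} that the Cwikel--Hundertmark constant is larger by a factor $2^{2q-5}q$). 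So even if you patched the mixed terms, a four-piece Ky~Fan argument would not deliver the constant $(p+1)^{p-1}/(p-1)^p$. The good constant comes precisely from avoiding the singular-value splitting and working instead with the dual density inequality, where the triangle inequality is applied once to $\gamma^{1/2}$ rather than to $C$.
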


Before proving this result we shall show that Cwikel's theorem is an easy consequence of it. We recall that $\S_{q,w}(\mathcal H)$ is the space of compact operators $K$ on a separable Hilbert space $\mathcal H$ satisfying
$$
\| K \|_{q,w}^q = \sup_{\kappa>0} \kappa^{q}\, n(\kappa, (K^* K)^{1/2} ) <\infty \,.
$$
Here $n(\kappa,(K^* K)^{1/2})$ denotes the number of eigenvalues of $(K^* K)^{1/2}$ larger than $\kappa$, counting multiplicities.

\begin{corollary}
[Cwikel's theorem]
\label{cwikel}
If $f\in L_{q}(\R^d)$ and $g\in L_{q,w}(\R^d)$ for some $q>2$, then $f(X)\,g(-i\nabla)\in\S_{q,w}( L_2(\R^d) )$ with
$$
\| f(X)\, g(-i\nabla) \|_{q,w}^q \leq \left( \frac{q}{q-2} \right)^{q/2} \left( \frac{q+2}{q-2} \right)^{(q-2)/2} (2\pi)^{-d} \ \| f \|_{q}^{q} \ \| g\|_{q,w}^q \,.
$$
\end{corollary}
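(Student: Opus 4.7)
The plan is to reduce Cwikel's theorem to Theorem \ref{main} in three standard steps: first identify the singular values of $K := f(X)g(-i\nabla)$ with the square roots of eigenvalues of an operator of the shape covered by the theorem; next apply the theorem to bound $\tr(\,\cdot - \mu)_+$; and finally pass back to the counting function via a Chebyshev-type estimate, optimized in $\mu$.

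For the first step I would observe that $K^*K = \overline{g}(-i\nabla)\,|f|^2(X)\,g(-i\nabla)$ has the same nonzero spectrum as $T := |g|(-i\nabla)\,|f|^2(X)\,|g|(-i\nabla)$, by the identity of nonzero spectra of $AB$ and $BA$ applied to $A = \overline{g}(-i\nabla)|f|(X)$, $B = |f|(X)g(-i\nabla)$. Hence $n(\kappa, (K^*K)^{1/2}) = n(\kappa^2, T)$, and it suffices to bound $\kappa^q n(\kappa^2, T)$ uniformly in $\kappa > 0$. Substituting $a = |g|^2$, $b = |f|^2$, $p = q/2 > 1$ into Theorem \ref{main}, and using $\||g|^2\|_{q/2,w}^{q/2} = \|g\|_{q,w}^q$ and $\||f|^2\|_{q/2}^{q/2} = \|f\|_q^q$, one obtains for every $\mu > 0$
$$
\tr(T-\mu)_+ \le 2\mu^{1-q/2}\,\frac{(q+2)^{(q-2)/2}}{(q-2)^{q/2}}\,(2\pi)^{-d}\,\|f\|_q^q\,\|g\|_{q,w}^q,
$$
after simplifying $(p+1)^{p-1}/(p-1)^p$ at $p = q/2$.

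For the third step I would combine this with the elementary bound $n(\lambda,T) \le (\lambda-\mu)^{-1}\tr(T-\mu)_+$, valid for $0 < \mu < \lambda$, take $\lambda = \kappa^2$ and substitute $\mu = t\kappa^2$ with $t \in (0,1)$. All $\kappa$-dependence cancels, leaving
$$
\kappa^q\, n(\kappa^2, T) \le \frac{2\, t^{1-q/2}}{1-t}\,\frac{(q+2)^{(q-2)/2}}{(q-2)^{q/2}}\,(2\pi)^{-d}\,\|f\|_q^q\,\|g\|_{q,w}^q.
$$
Elementary calculus shows $(1-t)t^{q/2-1}$ is maximized at $t_* = (q-2)/q$ with value $\tfrac{2}{q}\bigl(\tfrac{q-2}{q}\bigr)^{(q-2)/2}$, so the minimum of $t^{1-q/2}/(1-t)$ is $q^{q/2}/\bigl(2(q-2)^{(q-2)/2}\bigr)$. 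Substituting and using $(q-2)^{q/2}(q-2)^{(q-2)/2} = (q-2)^{q-1}$ reproduces exactly the stated constant $\bigl(q/(q-2)\bigr)^{q/2}\bigl((q+2)/(q-2)\bigr)^{(q-2)/2}$. I do not expect any conceptual obstacle; the spectral identification in the first step is standard, and the remaining difficulty is purely the bookkeeping needed to arrive at the sharp form of the constant stated in the corollary.
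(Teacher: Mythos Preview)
Your argument is correct and essentially the same as the paper's: the paper reduces to non-negative $f,g$ by unitaries rather than via your $AB\sim BA$ trick, and packages your third step (Chebyshev plus optimization at $\mu=(1-2/q)\kappa^2$) as Lemma~\ref{equiv}. One small slip: with your choice $A=\overline{g}(-i\nabla)|f|(X)$, $B=|f|(X)g(-i\nabla)$ you get $BA=|f|(X)\,|g|^2(-i\nabla)\,|f|(X)$, not $T$; either apply the $AB\sim BA$ identity once more, or simply observe that this operator is already $b(X)^{1/2}a(-i\nabla)b(X)^{1/2}$, which has the same nonzero spectrum as $a(-i\nabla)^{1/2}b(X)a(-i\nabla)^{1/2}$ and so enjoys the same bound from Theorem~\ref{main}.
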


In order to deduce Corollary \ref{cwikel} from Theorem \ref{main} we use the following lemma, which shows that the quantity bounded in Theorem \ref{main} is indeed equivalent to the norm in a weak Schatten class.

\begin{lemma}
[Equivalent quasi-norms]
\label{equiv}
Let $K$ be a compact operator on a separable Hilbert space $\mathcal H$ and let $q>2$. Then $K\in \S_{q,w}(\mathcal H)$ iff
$$
|K |_q' := \left( \sup_{\mu>0} \mu^{q/2-1} \tr(K^*K-\mu)_+ \right)^{1/q} <\infty\,.
$$
Moreover,
$$
|K|_q' \leq \left(\frac{2}{q-2}\right)^{1/q} \|K\|_{q,w}
\leq \left(\frac{q}{q-2}\right)^{1/2} |K|_q' \,.
$$
\end{lemma}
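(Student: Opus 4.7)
The plan is to diagonalize and work entirely with the singular value sequence $s_1 \geq s_2 \geq \cdots \geq 0$ of $K$, encoded through its distribution function $N(\kappa) := \#\{n : s_n > \kappa\}$. Then $\|K\|_{q,w}^q = \sup_{\kappa>0}\kappa^q N(\kappa)$ by definition, while the layer-cake identity gives
$$\tr(K^*K - \mu)_+ = \sum_n (s_n^2 - \mu)_+ = \int_\mu^\infty N(\sqrt u)\,du,$$
so that $|K|_q'^q = \sup_{\mu>0}\mu^{q/2-1}\int_\mu^\infty N(\sqrt u)\,du$. Both inequalities of the lemma are thereby reduced to comparisons between two suprema of functionals of the monotone decreasing function $N$.

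For the first inequality I would substitute the pointwise bound $N(\sqrt u) \leq \|K\|_{q,w}^q\,u^{-q/2}$ (which is immediate from the definition of $\|K\|_{q,w}$) into the integral:
$$\int_\mu^\infty N(\sqrt u)\,du \leq \|K\|_{q,w}^q \int_\mu^\infty u^{-q/2}\,du = \frac{2}{q-2}\,\|K\|_{q,w}^q\,\mu^{1-q/2}.$$
Multiplying by $\mu^{q/2-1}$, taking the supremum over $\mu>0$, and then the $q$-th root yields $|K|_q' \leq (2/(q-2))^{1/q}\|K\|_{q,w}$.

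For the second inequality I would exploit monotonicity of $N$ in the reverse direction: for any $\tau>0$ and $\mu \in (0,\tau^2)$, the bound $N(\sqrt u) \geq N(\tau)$ on $[\mu,\tau^2]$ gives
$$\int_\mu^\infty N(\sqrt u)\,du \geq N(\tau)(\tau^2 - \mu),$$
so that $\mu^{q/2-1}(\tau^2-\mu)N(\tau) \leq |K|_q'^q$ for every admissible pair $(\mu,\tau)$. The substitution $\mu = c\tau^2$ reduces matters to maximising $c^{q/2-1}(1-c)$ over $c \in (0,1)$, a standard calculus exercise whose maximiser is $c = (q-2)/q$. Inserting this optimal $\mu$ and taking the supremum over $\tau$ then produces a bound of the required form $\|K\|_{q,w} \leq (q/(q-2))^{1/2}|K|_q'$.

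The only real difficulty lies in the bookkeeping of exponents in the second step: the mechanics are just the layer-cake identity combined with the monotonicity of $N$ applied once in each direction, but one has to track the powers of $\mu$, $\tau$, and $(q-2)$ carefully through the one-variable optimization in order to recover exactly the constant stated in the lemma.
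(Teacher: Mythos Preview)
Your argument is essentially identical to the paper's: the paper also writes $\tr(K^*K-\mu)_+ = \int_\mu^\infty n(\sqrt\sigma,(K^*K)^{1/2})\,d\sigma$, uses $n(\sqrt\sigma)\leq \|K\|_{q,w}^q\,\sigma^{-q/2}$ for the first inequality, and for the second uses the Chebyshev-type bound $n(\kappa)\leq (\kappa^2-\mu)^{-1}\tr(K^*K-\mu)_+$ (equivalent to your monotonicity step) optimized at $\mu=(1-2/q)\kappa^2$. One caution on the bookkeeping you flagged: the optimization actually gives $\|K\|_{q,w}\leq ((q-2)/2)^{1/q}(q/(q-2))^{1/2}|K|_q'$, which is precisely the lemma's second inequality after rearranging --- your final displayed form omits the factor $((q-2)/2)^{1/q}$.
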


\begin{proof}
Since $(E-\mu)_+ = \int_\mu^\infty \chi_{(\sigma,\infty)}(E) \,d\sigma$ we have
$$
\tr(K^*K-\mu)_+ = \int_\mu^\infty n(\sqrt\sigma,(K^*K)^{1/2}) \,d\sigma \,.
$$
If $\|K\|_{q,w}$ is finite, this is bounded by
$$
\int_\mu^\infty n(\sqrt\sigma,(K^*K)^{1/2}) \,d\sigma \leq \|K\|_{q,w}^q \int_\mu^\infty \sigma^{-q/2} \,d\sigma = (q/2-1)^{-1} \|K\|_{q,w}^q \mu^{-q/2+1} \,.
$$
Thus, $|K|_q' \leq (q/2-1)^{-1/q} \|K\|_{q,w}$. Conversely, since $\chi_{(\kappa^2,\infty)}(E) \leq (\kappa^2-\mu)^{-1} (E-\mu)_+$ for any $\mu<\kappa^2$ we have
$$
n(\kappa,(K^*K)^{1/2}) \leq (\kappa^2-\mu)^{-1} \tr(K^*K-\mu)_+ \,.
$$
If $|K |_q'$ is finite, this is bounded by
$$
(\kappa^2-\mu)^{-1} \tr(K^*K-\mu)_+ \leq (\kappa^2-\mu)^{-1} \mu^{-q/2+1} \left( |K|_q' \right)^q \,.
$$
We optimize the right side by choosing $\mu=(1-2/q)\kappa^2$ and obtain
$$
n(\kappa,(K^*K)^{1/2}) \leq \frac{q}{2} \left(1-\frac2q\right)^{-q/2+1} \kappa^{-q} \left( |K|_q' \right)^q \,,
$$
that is, $\|K\|_{q,w} \leq \left(q/2\right)^{1/q} \left(1-2/q\right)^{-1/2+1/q} |K|_q'$, as claimed.
\end{proof}

\begin{proof}[Proof of Corollary \ref{cwikel}]
After applying unitaries, we may assume that $f$ and $g$ are non-negative. We put $K=f(X)g(-i\nabla)$. Applying Theorem \ref{main} with $a=g^2$, $b=f^2$ and $p=q/2$ we infer that
$$
\left( \| K \|_q'\right)^q = \sup_{\mu>0} \mu^{q/2-1} \tr(K^*K-\mu)_+ \leq \frac{\left(p+1\right)^{p-1}}{\left(p-1\right)^p} (2\pi)^{-d} \ \| a\|_{p,w}^{p} \| b\|_p^p \,.
$$
Lemma \ref{equiv} allows to turn this into a bound for $\|K\|_{q,w}$, which is the statement of Corollary \ref{cwikel}.
\end{proof}

We now turn to the proof of Theorem \ref{main}. The variational principle for sums of eigenvalues allows us to reformulate it in a dual form, in which we shall actually prove it. The precise statement is the following. (As usual, we write $p'=p/(p-1)$.)

\begin{lemma}
[Duality]
\label{dual}
Let $A$ be a non-negative operator in $L_2(X)$ (where $X$ is a sigma-finite measure space) with $\ker A=\{0\}$ and let $p>1$. Then the following inequalities are equivalent,
\begin{enumerate}
\item[(i)] $\tr\left( A^{1/2} b A^{1/2} - \mu\right)_+ \leq D \mu^{-p+1} \int_X b^p \,dx$ for every $0\leq b\in L_p(X)$ and $\mu>0$,
\item[(ii)] $\tr\gamma^{1/2}A^{-1}\gamma^{1/2} \geq K \int_X \gamma(x,x)^{p'} \,dx$ for every operator $0\leq \gamma\leq A$,
\end{enumerate}
in the sense that the optimal constants $D$ and $K$ are related by
$$
\left( p\ D\right)^{p'} \left(p'\ K \right)^{p} = 1 \,.
$$
\end{lemma}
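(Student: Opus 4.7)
The plan is a Legendre-type duality built on the variational identity
\begin{equation*}
\tr(T-\mu)_+ = \sup_{0 \leq \eta \leq 1} \tr\bigl((T-\mu)\eta\bigr),
\end{equation*}
applied to $T = A^{1/2} b A^{1/2}$, combined with the bijection $\eta = A^{-1/2}\gamma A^{-1/2} \leftrightarrow \gamma = A^{1/2}\eta A^{1/2}$ between operators $0 \leq \eta \leq 1$ and operators $0 \leq \gamma \leq A$. Cyclicity of the trace under this substitution yields
\begin{equation*}
\tr\bigl((A^{1/2} b A^{1/2})\eta\bigr) = \int_X b(x)\,\gamma(x,x)\,dx
\quad\text{and}\quad
\mu\tr\eta = \mu\tr\gamma^{1/2} A^{-1}\gamma^{1/2},
\end{equation*}
so both (i) and (ii) live on the same bilinear pairing of $b$ against the diagonal $\gamma(\cdot,\cdot)$, and the equivalence reduces to optimizing a single scalar parameter.

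For (i)$\Rightarrow$(ii), I fix $0\leq\gamma\leq A$ and combine $\tr\bigl((A^{1/2} b A^{1/2}-\mu)\eta\bigr)\leq\tr(A^{1/2} b A^{1/2}-\mu)_+$ with hypothesis (i) to obtain
\begin{equation*}
\int_X b(x)\gamma(x,x)\,dx - \mu\tr\gamma^{1/2} A^{-1}\gamma^{1/2} \leq D\mu^{-(p-1)}\int_X b^p\,dx
\end{equation*}
for every $b \geq 0$ and $\mu > 0$. Pointwise, $b(x)\gamma(x,x) - D\mu^{-(p-1)}b(x)^p$ is maximized over $b(x)\geq 0$ at $b(x)=(\gamma(x,x)\mu^{p-1}/(pD))^{1/(p-1)}$ with value $\mu\bigl(p'(pD)^{1/(p-1)}\bigr)^{-1}\gamma(x,x)^{p'}$; dividing by $\mu$ gives (ii) with $K=\bigl(p'(pD)^{1/(p-1)}\bigr)^{-1}$, which on raising to the $p$-th power rearranges to $(pD)^{p'}(p'K)^p=1$.

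For (ii)$\Rightarrow$(i), insert $\eta = A^{-1/2}\gamma A^{-1/2}$ into the variational identity and apply (ii) to the $\mu\tr\eta$ term:
\begin{equation*}
\tr(A^{1/2} b A^{1/2}-\mu)_+ \leq \sup_{0\leq\gamma\leq A}\left[\int_X b(x)\gamma(x,x)\,dx - \mu K\int_X\gamma(x,x)^{p'}\,dx\right].
\end{equation*}
Apply Young's inequality $b\gamma\leq\lambda^{-p}b^p/p+\lambda^{p'}\gamma^{p'}/p'$ pointwise, with $\lambda$ chosen so that $\lambda^{p'}/p'=\mu K$; the $\gamma^{p'}$ terms cancel exactly, and what remains is $(p\lambda^p)^{-1}\int b^p = \mu^{-(p-1)}\bigl(p(p'K)^{p-1}\bigr)^{-1}\int b^p$, which is (i) with $D=\bigl(p(p'K)^{p-1}\bigr)^{-1}$ and again gives the relation $(pD)^{p'}(p'K)^p=1$. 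Note that both directions are sharp because Young's inequality and the pointwise Legendre transform are both attained by the same extremal profile $b\propto\gamma^{p'-1}$.

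The one delicate point is justifying the substitution $\eta\leftrightarrow\gamma$ when $A$ is unbounded: a priori $A^{-1/2}\gamma A^{-1/2}$ is only densely defined, and rewriting $\tr(A^{-1}\gamma)$ as $\tr\gamma^{1/2}A^{-1}\gamma^{1/2}$ requires enough integrability. Since $\ker A=\{0\}$, one can restrict $\gamma$ to the spectral subspace $\chi_{(\delta,1/\delta)}(A)$, on which $A$ and $A^{-1}$ are bounded, and then let $\delta\to 0$; the supremum in the variational identity is captured in this limit because $\tr(T-\mu)_+$ only depends on the finitely many eigenvalues of $T$ above $\mu$. Modulo this approximation, the proof is just a one-variable Legendre transform plus Young's inequality.
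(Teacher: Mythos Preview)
Your proof is correct and follows essentially the same route as the paper: the variational principle $\tr(T-\mu)_+ = \sup_{0\leq\eta\leq 1}\tr\eta^{1/2}(T-\mu)\eta^{1/2}$, the substitution $\gamma = A^{1/2}\eta A^{1/2}$, and then a pointwise Legendre/Young optimization in $b$ (respectively in $\rho=\gamma(x,x)$). The paper actually writes out only the direction (ii)$\Rightarrow$(i) and says the converse is ``similar and omitted,'' whereas you spell out both directions and add a remark on the unbounded case, but the underlying mechanism is identical.
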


\begin{proof}
This is a consequence of the variational characterization for the expression on the left side of (i), namely,
$$
\tr\left( A^{1/2} b A^{1/2} - \mu\right)_+ = \sup_{0\leq\delta\leq 1} \tr \delta^{1/2} \left( A^{1/2} b A^{1/2} - \mu\right)\delta^{1/2} \,.
$$
To prove that (ii) implies (i) we change variables from $\delta$ to $\gamma=A^{1/2}\delta A^{1/2}$. Then the conditions $0\leq\delta\leq 1$ imply that $0\leq\gamma\leq A$, and therefore by (ii),
$$
\tr \delta^{1/2} \left( A^{1/2} b A^{1/2} - \mu\right)\delta^{1/2} 
=\tr \gamma^{1/2} \left(b - \mu A^{-1} \right)\gamma^{1/2}
\leq \int_X \left( b \rho - \mu K \rho^{p'} \right) \,dx \,,
$$
where $\rho(x)=\gamma(x,x)$. Maximizing the right side over all functions $\rho\geq 0$ (i.e., ignoring the fact that $\rho$ was related to $\gamma$) we find that
$$
\int_X \left( b \rho - \mu K \rho^{p'} \right) \,dx \leq (K\mu)^{-p+1} \frac{(p-1)^{p-1}}{p^p} \int_X b^p \,dx \,,
$$
i.e., (i) holds and the optimal constant satisfies $D \leq K^{-p+1} \frac{(p-1)^{p-1}}{p^p}$. The proof of the converse implication is similar and is omitted.
\end{proof}

We now prove the dual form of Theorem \ref{main}. As we mentioned in the introduction, the proof follows closely some ideas of Rumin \cite{Ru1,Ru}.

\begin{lemma}
\label{rumin}
Let $a\in L_{p,w}(\R^d)$ with $p>1$ and assume that $a>0$ a.e. Then for any operator $\gamma$ on $L_2(\R^d)$ satisfying $0 \leq \gamma \leq a(-i\nabla)$, we have
$$
\tr\gamma^{1/2}a(-i\nabla)^{-1}\gamma^{1/2} \geq \frac{p-1}{p+1}\ R_{d,p}^\sc\  \| a\|_{p,w}^{-p'} \int_{\R^d} \gamma(x,x)^{p'} \,dx
$$
where $R_{d,p}^\sc = (2\pi)^{d/(p-1)} \left(\frac{p-1} p \right)^{p/(p-1)}$.
\end{lemma}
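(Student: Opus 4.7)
The plan is to follow Rumin's strategy. Introduce the momentum-space projections $P_t = \chi_{\{a > t\}}(-i\nabla)$ and $Q_t = 1 - P_t$ for each $t > 0$; these commute with $A = a(-i\nabla)$. The crucial observation about $P_t$ is that $\{a > t\}$ has finite measure (by the weak-$L_p$ hypothesis), so $P_t$ has a bounded continuous kernel and the vector $P_t\delta_x$ actually belongs to $L_2(\R^d)$, with $\|P_t\delta_x\|^2 = P_t(x,x) = (2\pi)^{-d}|\{a > t\}|$.

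First I would establish a pointwise upper bound on $\gamma(x,x)$. Reading $\gamma(x,x) = \|\gamma^{1/2}\delta_x\|^2$ (with $\gamma^{1/2}\delta_x$ understood as the $L_2$-function $y \mapsto \gamma^{1/2}(y,x)$), the splitting $\gamma^{1/2}\delta_x = \gamma^{1/2}P_t\delta_x + \gamma^{1/2}Q_t\delta_x$ and the ordinary triangle inequality in $L_2$ give
\[
\sqrt{\gamma(x,x)} \le \sqrt{(P_t\gamma P_t)(x,x)} + \sqrt{(Q_t\gamma Q_t)(x,x)}.
\]
Since $P_t$ commutes with $A$, the hypothesis $\gamma \le A$ promotes to $P_t\gamma P_t \le AP_t$; testing this operator inequality against $P_t\delta_x$ yields the uniform-in-$x$ bound $(P_t\gamma P_t)(x,x) \le (AP_t)(x,x) = (2\pi)^{-d}\int_{\{a>t\}} a\,d\xi$. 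A short layer-cake computation $\int_{\{a>t\}} a = t|\{a>t\}| + \int_t^\infty |\{a>s\}|\,ds$ together with $|\{a>s\}| \le \|a\|_{p,w}^p s^{-p}$ gives $(P_t\gamma P_t)(x,x) \le M(t) := (2\pi)^{-d}\frac{p}{p-1}\|a\|_{p,w}^p t^{1-p}$, whence
\[
(Q_t\gamma Q_t)(x,x) \ge \Bigl(\sqrt{\gamma(x,x)} - \sqrt{M(t)}\Bigr)_+^{2}.
\]

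Next I would use the layer-cake representation $A^{-1} = \int_0^\infty s^{-2} Q_s\,ds$ (coming from $a(\xi)^{-1} = \int_0^\infty s^{-2}\chi_{\{a \le s\}}\,ds$) to rewrite the left-hand side as
\[
\tr \gamma^{1/2} A^{-1}\gamma^{1/2} = \int_{\R^d} \eta(x)\,dx, \qquad \eta(x) := \int_0^\infty s^{-2} (Q_s\gamma Q_s)(x,x)\,ds.
\]
Inserting the pointwise lower bound and substituting $v = s/s^*$ with $M(s^*) = \gamma(x,x)$ reduces estimating $\eta(x)$ to the closed-form one-variable integral $\int_1^\infty v^{-2}(1 - v^{(1-p)/2})^2\,dv = (p-1)^2/(p(p+1))$. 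Collecting constants yields $\eta(x) \ge \frac{p-1}{p+1}R^{\sc}_{d,p}\|a\|_{p,w}^{-p'}\gamma(x,x)^{p'}$, and integrating over $x$ completes the argument.

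The main delicacy lies in obtaining the sharp constant: it is essential to use the refined triangle bound $(\sqrt A + \sqrt B)^2$ rather than the cruder $2A + 2B$, because only the former allows the factor $(p-1)/(p+1)$ to emerge cleanly from the 1D integral above. A secondary technical issue is the passage from the operator inequality $P_t\gamma P_t \le AP_t$ to its pointwise kernel counterpart; this is legitimate because $P_t\delta_x$ is a genuine $L_2$ vector, but a standard approximation argument may be needed if $\gamma^{1/2}$ does not a priori have a sufficiently regular integral kernel.
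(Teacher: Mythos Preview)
Your proposal is correct and follows essentially the same approach as the paper. The only difference is in how the pointwise inequality is made rigorous: where you work formally with $\gamma^{1/2}\delta_x$ and flag the justification as a secondary technical issue, the paper replaces $\delta_x$ by $\chi_\Omega$ for an arbitrary set $\Omega$ of finite measure, applies the triangle inequality in the Hilbert--Schmidt norm $\|\gamma^{1/2}\chi_\Omega\|_2 \le \|\gamma^{1/2}Q_\tau\chi_\Omega\|_2 + \|\gamma^{1/2}P_\tau\chi_\Omega\|_2$, and then invokes Lebesgue's differentiation theorem to pass to the pointwise bound --- exactly the ``standard approximation argument'' you anticipate.
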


The superscript `sc' in $R_{d,p}^\sc$ stands for `semi-classical'. This will be further explored in Section \ref{sec:concl}.

It is part of the assertion that the assumption $\tr\gamma^{1/2}a(-i\nabla)^{-1}\gamma^{1/2}<\infty$ implies that the diagonal $\gamma(x,x)$ makes sense for a.e. $x\in\R^d$ and belongs to $L_{p'}(\R^d)$. Note that this diagonal value is well-defined if $\gamma$ is a finite rank operator. Given the bound from the lemma in this case, which is independent of the (finite) rank, the extension to general $\gamma$ can be carried out, for instance, by monotone convergence. We omit the details since the finite rank version is all we need for the proof of Theorem \ref{main}.

\begin{proof}
Since $E^{-1}=\int_0^\infty \chi_{(0,\tau]} (E) \tau^{-2} \,d\tau$, the spectral theorem together with Fubini's theorem implies that
\begin{equation}
\label{eq:repr}
\tr\gamma^{1/2}a(-i\nabla)^{-1}\gamma^{1/2} = \int_0^\infty \tr\gamma_\tau \, \frac{d\tau}{\tau^2} = \int_{\R^d} \int_0^\infty \rho_\tau(x)\, \frac{d\tau}{\tau^{2}} \,dx \,,
\end{equation}
where $\gamma_\tau = \chi_{(0,\tau]}(a(-i\nabla))\, \gamma\, \chi_{(0,\tau]}(a(-i\nabla))$ and where $\rho_\tau(x)=\gamma_\tau(x,x)$ is its density.

Our next goal is to find a pointwise lower bound on $\rho_\tau$ in terms of $\rho$. To do this, let $\Omega\subset\R^d$ be any set of finite measure and note that
\begin{align*}
\left( \int_\Omega \rho(x) \,dx \right)^{1/2} \!\! = \| \gamma^{1/2}\chi_\Omega \|_2 
\leq \| \gamma^{1/2} \chi_{(0,\tau]}(a(-i\nabla)) \ \chi_\Omega \|_2 + \| \gamma^{1/2} \chi_{(\tau,\infty)}(a(-i\nabla)) \ \chi_\Omega \|_2,
\end{align*}
where $\|\cdot\|_2$ is the Hilbert--Schmidt norm. The first term on the right side is
$$
\| \gamma^{1/2} \chi_{(0,\tau]}(a(-i\nabla)) \ \chi_\Omega \|_2 = \| \gamma_\tau^{1/2} \ \chi_\Omega \|_2
= \left( \int_\Omega \rho_\tau(x) \,dx \right)^{1/2} \,,
$$
and the second term, since $\gamma\leq a(-i\nabla)$, is bounded from above by
\begin{align*}
\| \gamma^{1/2} \chi_{(\tau,\infty)}(a(-i\nabla))\ \chi_\Omega \|_2 & \leq \| a(-i\nabla)^{1/2} \chi_{(\tau,\infty)}(a(-i\nabla))\ \chi_\Omega \|_2 \\
& = |\Omega|^{1/2} \left( \int_{\R^d} a(p) \chi_{\{a>\tau\}}(p) \frac{dp}{(2\pi)^d} \right)^{1/2} \,.
\end{align*}
Since $a(p)=\int_0^\infty \chi_{\{ a >\sigma \}}(p) \,d\sigma$ and since $| \{a> t\} | \leq \|a\|_{p,w}^p t^{-p}$, we find
\begin{align*}
\int_{\R^d} a(p) \chi_{\{a>\tau\}}(p) \,dp & = \int_0^\infty \int_{\R^d}  \chi_{\{a>\sigma\}}(p) \chi_{\{a>\tau\}}(p) \,dp\,d\sigma
= \int_0^\infty | \{ a> \max\{\sigma,\tau\} \} | \,d\sigma \\
& \leq \|a\|_{p,w}^p \int_0^\infty \min\{\sigma^{-p},\tau^{-p}\} \,d\sigma = \frac p{p-1} \| a\|_{p,w}^p \, \tau^{-p+1} \,.
\end{align*}
Thus, we have shown that
$$
\left( \int_\Omega \rho(x) \,dx \right)^{1/2} 
\leq \left( \int_\Omega \rho_\tau(x) \,dx \right)^{1/2} + |\Omega|^{1/2} (2\pi)^{-d/2} \left(\frac p{p-1} \right)^{1/2} \| a\|_{p,w}^{p/2} \, \tau^{-(p-1)/2} \,.
$$
Since this is valid for any $\Omega$, Lebesgue's differentiation theorem implies that
$$
\rho(x)^{1/2} \leq \rho_\tau(x)^{1/2} + (2\pi)^{-d/2} \left(\frac p{p-1} \right)^{1/2} \| a\|_{p,w}^{p/2} \, \tau^{-(p-1)/2} \quad \text{a.e.} \,,
$$
and therefore
$$
\rho_\tau(x) \geq \left( \rho(x)^{1/2} - (2\pi)^{-d/2} \left(\frac p{p-1} \right)^{1/2} \| a\|_{p,w}^{p/2} \, \tau^{-(p-1)/2} \right)_+^2 \quad \text{a.e.} \,.
$$

Finally, we insert this bound into \eqref{eq:repr} and compute for a.e. $x$
\begin{align*}
& \int_0^\infty \left( \rho(x)^{1/2} - (2\pi)^{-d/2} \left(\frac p{p-1} \right)^{1/2} \| a\|_{p,w}^{p/2} \, \tau^{-(p-1)/2} \right)_+^2 \, \frac{d\tau}{\tau^{2}} \\
& \quad = \rho(x)^{p/(p-1)} (2\pi)^{d/(p-1)} \left(\frac{p-1} p \right)^{p/(p-1)} \| a\|_{p,w}^{-p/(p-1)} \frac{p-1}{p+1} \,.
\qedhere
\end{align*}
\end{proof}

\begin{proof}
[Proof of Theorem \ref{main}]
If $a>0$ a.e., then Lemmas \ref{dual} and \ref{rumin} imply that
$$
\tr\left( a(-i\nabla)^{1/2} b(X) a(-i\nabla)^{1/2} - \mu\right)_+ 
\leq \mu^{-p+1} \left( \frac{p+1}{p-1} \right)^{p-1} D_{d,p}^\sc \ \| a\|_{p,w}^{p} \| b\|_p^p \,,
$$
where
$$
D_{d,p}^\sc = \frac{(p-1)^{p-1}}{p^p} \ \left( R_{d,p}^\sc \right)^{-p+1} \,.
$$
For general non-negative $a$ we apply the bound to $a_\epsilon=\max\{a,\epsilon \tilde a\}$, where $\tilde a$ is a fixed, positive function in $L_{p,\infty}(\R^d)$. Since
\begin{align*}
& \tr\left( a(-i\nabla)^{1/2} b(X) a(-i\nabla)^{1/2} - \mu\right)_+
= \tr\left( b(X)^{1/2} a(-i\nabla) b(X)^{1/2} - \mu\right)_+ \\
& \quad \leq \tr\left( b(X)^{1/2} a_\epsilon(-i\nabla) b(X)^{1/2} - \mu\right)_+
= \tr\left( a_\epsilon(-i\nabla)^{1/2} b(X) a_\epsilon(-i\nabla)^{1/2} - \mu\right)_+ \,,
\end{align*}
the assertion follows from the bound for $a_\epsilon$ and the fact that $\lim \| a_\epsilon \|_{p,w} = \|a\|_{p,w}$.
\end{proof}


\section{Generalizations}

\subsection{An operator-valued version of Cwikel's theorem}

The works \cite{La,LaWe1} have made clear that good constants in CLR and related inequalities in higher dimensions can be derived from operator-valued versions of these inequalities in lower dimensions. In the case of Cwikel's theorem this strategy was implemented in \cite{H1}. The constant in the CLR inequality for Schr\"odinger operators with matrix-valued potentials was improved in \cite{FrLiSe1}. In this subsection we show that Rumin's proof can also be modified to yield an operator-valued version of Cwikel's theorem. This extension is not straightforward and leads, unfortunately, to a somewhat worse constant than that in Corollary \ref{cwikel}.

Another thing that we show in this subsection is that the structure of $\R^d$ is not really relevant for Cwikel's theorem. Indeed, our theorem holds on a general pair of measure spaces, with the role of the Fourier transform being played by a general unitary operator with bounded integral kernel. Results in this spirit have already appeared in \cite{BiKaSo}, but it is not clear whether their techniques also apply in the operator-valued case.

We begin with some notations. In this subsection, let $(X,dx)$ and $(Y,dy)$ be sigma-finite measure spaces and let $\mathcal H$ and $\mathcal G$ be separable Hilbert spaces. We denote by $L_{p}(X,\S_p(\mathcal H))$ the space of all measurable functions $f$ on $X$ with values in the compact operators in $\mathcal H$ such that
$$
\| f\|_{L_p(\S_p)}^p = \int_X \| f(x)\|_{\S_p(\mathcal H)}^p \,dx <\infty \,.
$$
Similarly, $L_{p,w}(Y,\B(\mathcal G))$ is the space of all measurable functions $g$ on $Y$ with values in the bounded operators on $\mathcal G$ such that
$$
\| g\|_{L_{p,w}(\B)}^p = \sup_{\tau>0} \tau^p \, |\{ y\in Y:\ \|g(y)\|_{\B(\mathcal G)} >\tau \} |  \,. 
$$

\begin{theorem}
[Operator-valued version of Cwikel's theorem]
\label{cwikelop}
Let $\Phi: L_2(X,\mathcal H)\to L_2(Y,\mathcal G)$ be a unitary operator, which maps $L_1(X,\mathcal H)$ boundedly into $L_\infty(Y,\mathcal G)$. Let $q>2$. If $f\in L_{q}(X,\S_q(\mathcal H))$, $g\in L_{q,w}(Y,\B(\mathcal H))$, then $f\,\Phi^*\,g\in\S_{q,w}( L_2(Y,\mathcal H),L_2(X,\mathcal G) )$ with
$$
\| f\, \Phi^* \, g \|_{q,w}^q \leq \frac{q}2 \left( \frac{q}{q-2} \right)^{q-1} C^2 \ \| f \|_{L_{q}(\S_q)}^{q} \ \| g\|_{L_{q,w}(\B)}^q \,,
$$
where $C = \|\Phi\|_{L_1\to L_\infty}$.
\end{theorem}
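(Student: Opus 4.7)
The plan is to retrace the three-step architecture of Theorem \ref{main} and Corollary \ref{cwikel}: (i) reformulate the bound as an estimate on $\tr(K^*K-\mu)_+$ via Lemma \ref{equiv} (whose proof is purely Hilbert-space and carries over verbatim); (ii) pass through an operator-valued analog of Lemma \ref{dual} to a Rumin-type lower bound; and (iii) prove that lower bound by adapting Lemma \ref{rumin}. With $K=f\,\Phi^*\,g$, set $A=\Phi^*gg^*\Phi$ on $L_2(X,\mathcal H)$, $B(x)=f(x)^*f(x)$, and $p=q/2$; cyclicity gives $\tr(K^*K-\mu)_+=\tr(A^{1/2}BA^{1/2}-\mu)_+$, and the norms convert as $\|g\|_{L_{q,w}(\B)}^q=\|gg^*\|_{L_{p,w}(\B)}^p$ and $\|f\|_{L_q(\S_q)}^q=\int_X\tr_{\mathcal H}B(x)^p\,dx$. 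The operator-valued duality generalizes immediately: its proof needs only the variational formula for $\tr(T-\mu)_+$ together with the pointwise Schatten H\"older bound $\tr_{\mathcal H}(B(x)\rho(x))\le\tr_{\mathcal H}(B(x)^p)^{1/p}\tr_{\mathcal H}(\rho(x)^{p'})^{1/p'}$.

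The substantive step is an operator-valued Rumin inequality: for every $0\le\gamma\le A$,
$$
\tr\gamma^{1/2}A^{-1}\gamma^{1/2}\ \ge\ K_{\mathrm{op}}\int_X \tr_{\mathcal H}\gamma(x,x)^{p'}\,dx,
$$
with $K_{\mathrm{op}}$ proportional to $C^{-2/(p-1)}\|gg^*\|_{L_{p,w}(\B)}^{-p/(p-1)}$. Following Rumin, decompose $A^{-1}=\int_0^\infty\chi_{(0,\tau]}(A)\,\tau^{-2}\,d\tau$ and set $\gamma_\tau=P_\tau\gamma P_\tau$, $P_\tau=\chi_{(0,\tau]}(A)$. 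Since $\Phi$ is unitary, $P_\tau=\Phi^*q_\tau\Phi$, where $q_\tau$ is multiplication by the spectral projection $q_\tau(y)$ of $g(y)g(y)^*\in\B(\mathcal G)$ onto $(0,\tau]$. For a unit vector $\eta\in\mathcal H$ and a set $\Omega\subset X$ of finite measure, the multiplication operator $M=\chi_\Omega\otimes|\eta\rangle\langle\eta|$ on $L_2(X,\mathcal H)$ satisfies $\|\gamma^{1/2}M\|_{\S_2}^2=\int_\Omega\langle\eta,\gamma(x,x)\eta\rangle\,dx$ and likewise for $\gamma_\tau$; the triangle inequality for the Hilbert--Schmidt norm applied to $\gamma^{1/2}=\gamma^{1/2}P_\tau+\gamma^{1/2}(I-P_\tau)$, together with $\gamma\le A$, reduces matters to $\|A^{1/2}(I-P_\tau)M\|_{\S_2}^2$. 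Using $A(I-P_\tau)=\Phi^*gg^*(I-q_\tau)\Phi$ and $M^2=M$, this equals $\tr\bigl(gg^*(I-q_\tau)\,\Phi M\Phi^*\bigr)$; inserting the kernel identity $(\Phi M\Phi^*)(y,y)=\int_\Omega|K_\Phi(y,x)\eta\rangle\langle K_\Phi(y,x)\eta|\,dx$, the pointwise bound $\|K_\Phi(y,x)\|_{\B(\mathcal H,\mathcal G)}\le C$, and the operator inequality $gg^*(I-q_\tau(y))\le\|g(y)\|_{\B}^2(I-q_\tau(y))$ yield the scalar-shaped estimate
$$
\|A^{1/2}(I-P_\tau)M\|_{\S_2}^2\ \le\ C^2|\Omega|\,\tfrac{p}{p-1}\|gg^*\|_{L_{p,w}(\B)}^p\,\tau^{-p+1}.
$$

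Lebesgue differentiation as $\Omega\downarrow\{x\}$, combined with separability of $\mathcal H$ to extract a single full-measure set good for every unit $\eta$, produces
$$
\langle\eta,\gamma_\tau(x,x)\eta\rangle\ \ge\ \bigl(\langle\eta,\gamma(x,x)\eta\rangle^{1/2}-E_\tau\bigr)_+^2,\qquad E_\tau=C\Bigl(\tfrac{p}{p-1}\Bigr)^{1/2}\|gg^*\|_{L_{p,w}(\B)}^{p/2}\tau^{-(p-1)/2}.
$$
The main obstacle is the passage from this scalar-per-$\eta$ bound to a trace-level bound on $\gamma(x,x)^{p'}$: the naive operator lift $\gamma_\tau(x,x)\ge(\gamma(x,x)^{1/2}-E_\tau I)_+^2$ genuinely fails (as one sees with rank-one examples in generic position). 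The clean fix is to test, at each good $x$, only against the normalized eigenbasis $\{\phi_i(x)\}$ of $\gamma(x,x)$: summing the scalar bound over $i$ and using that $\{\phi_i(x)\}$ is an ONB of $\mathcal H$ gives $\tr_{\mathcal H}\gamma_\tau(x,x)\ge\sum_i(\lambda_i(x)^{1/2}-E_\tau)_+^2$, and integrating against $\tau^{-2}\,d\tau$ by the same one-variable calculus as in the proof of Lemma \ref{rumin} produces exactly $K_{\mathrm{op}}\tr_{\mathcal H}\gamma(x,x)^{p'}$. Integrating in $x$, dualizing back, applying Lemma \ref{equiv}, and bookkeeping the constants then yields the stated bound; the worse constant relative to Corollary \ref{cwikel} is traceable to the estimate $\|K_\Phi(y,x)\eta\|\le C$, which is sharp in the scalar case but is lossy when $\mathcal H$ is nontrivial.
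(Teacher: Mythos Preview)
Your argument is correct in outline and in fact sharper than the paper's. The paper does \emph{not} use the triangle inequality for the Hilbert--Schmidt norm in the operator-valued case; instead it applies the parametrized Schwarz inequality $\gamma \le (1+\epsilon)P_\tau\gamma P_\tau + (1+\epsilon^{-1})P_\tau^\bot\gamma P_\tau^\bot$, tested against $\chi_\Omega\otimes H$ for arbitrary Hilbert--Schmidt $H$ on $\mathcal H$, obtaining directly the \emph{operator} inequality $\gamma(x,x)\le(1+\epsilon)\gamma_\tau(x,x)+(1+\epsilon^{-1})E_\tau^2$ a.e.\ in $x$; it then takes positive parts, integrates in $\tau$, and optimizes over $\epsilon$. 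The reason for this detour is precisely the obstruction you name---the operator lift of the square-root bound fails---and the $\epsilon$-optimization is what costs the extra factor $\tfrac q2\,(q/(q+2))^{(q-2)/2}$ relative to Corollary~\ref{cwikel}.

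Your eigenbasis trick sidesteps that loss entirely. Since $\tr_{\mathcal H}\gamma_\tau(x,x)=\sum_i\langle\phi_i(x),\gamma_\tau(x,x)\phi_i(x)\rangle$ for \emph{any} orthonormal basis, and your scalar bound holds for every unit vector once it holds on a countable dense set, summing over the eigenbasis of $\gamma(x,x)$ gives $\tr_{\mathcal H}\gamma_\tau(x,x)\ge\sum_i(\lambda_i(x)^{1/2}-E_\tau)_+^2$, and the $\tau$-integration reproduces the scalar constant $\tfrac{p-1}{p+1}\bigl(\tfrac{p-1}{p}\bigr)^{p'}$ eigenvalue by eigenvalue. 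Running the duality and Lemma~\ref{equiv} then yields the Corollary~\ref{cwikel} constant $\bigl(\tfrac{q}{q-2}\bigr)^{q/2}\bigl(\tfrac{q+2}{q-2}\bigr)^{(q-2)/2}C^2$, strictly better than the stated $\tfrac q2\bigl(\tfrac{q}{q-2}\bigr)^{q-1}C^2$. Your closing remark is therefore backwards: the bound $\|K_\Phi(y,x)\eta\|\le C$ is not lossy, and your route actually improves on the theorem as stated.

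One technical point deserves attention. You invoke ``Lebesgue differentiation as $\Omega\downarrow\{x\}$'', but $X$ is an arbitrary $\sigma$-finite measure space where no differentiation theorem is available. The paper's Schwarz-inequality route avoids this because, without the square root, the inequality $\int_\Omega f_1\le\int_\Omega f_2$ for all $\Omega$ already gives $f_1\le f_2$ a.e.\ by mere positivity of measures. In your setting you need the implication
\[
\Bigl(\int_\Omega k^2\Bigr)^{1/2}\le\Bigl(\int_\Omega h^2\Bigr)^{1/2}+c\,|\Omega|^{1/2}\ \text{for all }\Omega
\quad\Longrightarrow\quad k\le h+c\ \text{a.e.}
\]
This does hold on any $\sigma$-finite space (argue by contradiction: on $\{k\ge h+c+\delta\}$ pick a subset of positive measure where $h$ lies in an interval of length $<\delta$ and test there), but you should supply that argument rather than cite Lebesgue.
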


This constant is worse than that of Corollary \ref{cwikel} by a factor of $\frac{q}2 \left( \frac{q}{q+2} \right)^{(q-2)/2}>1$. It is still better, by a factor of $2^{2q-5} q$, than the constant
$$
\left( \frac{q}{2} \right)^q \left( \frac 8{q-2} \right)^{q-2} \frac q{q-2}\ C^2 \,.
$$
from \cite{H1} (which is the same as in \cite{Cw} in the scalar case).

\begin{proof}
The heart of the proof is the following analogue of Lemma \ref{rumin}. Namely, if $p>1$ and $a\in L_{p,w}(Y,\B(\mathcal G))$ with $a(y)\geq 0$ and $\ker a(y)=\{0\}$ for a.e. $y\in Y$, then for any operator $\gamma$ on $L_2(X,\mathcal H)$ satisfying $0 \leq \gamma \leq \Phi^*a\Phi$,
\begin{equation}
\label{eq:cwikelopgoal2}
\tr\gamma^{1/2} \Phi^* a^{-1} \Phi \gamma^{1/2} \geq \frac{(p-1)^{(2p-1)/(p-1)}}{p^{2p/(p-1)}}\ C^{-2/(p-1)} \  \| a\|_{L_{p,w}(\B)}^{-p'} \int_X \tr_{\mathcal H} \gamma(x,x)^{p'} \,dx \,.
\end{equation}
Here again $C=\|\Phi\|_{L_1\to L_\infty}$.

Accepting \eqref{eq:cwikelopgoal2} for the moment, we briefly explain how to finish the proof of Theorem \ref{cwikelop}. First, via a straightforward extension of Lemma \ref{dual} we infer from \eqref{eq:cwikelopgoal2} that
\begin{equation}
\label{eq:cwikelopgoal}
\tr\left( a^{1/2} \Phi b \Phi^* a^{1/2} - \mu\right)_+  \leq \mu^{-p+1} \left( \frac{p}{p-1}\right)^p C^2 \ \| a\|_{L_{p,w}(\B)}^{p} \| b\|_{L_p(\S_p)}^p \,,
\end{equation}
provided that $a(y)$ and $b(x)$ are non-negative for a.e. $x$ and $y$. This implies Theorem~\ref{cwikelop} in the same way as Theorem \ref{main} implied Corollary \ref{cwikel}.

We now turn to the proof of \eqref{eq:cwikelopgoal2}. We write, similarly as before,
\begin{equation}
\label{eq:reprop}
\tr\gamma^{1/2} \Phi^* a^{-1}\Phi\gamma^{1/2} = \int_{X} \int_0^\infty \tr_\mathcal H \gamma_\tau(x,x) \ \frac{d\tau}{\tau^{2}} \,dx
\end{equation}
with $\gamma_\tau = P_\tau \gamma P_\tau$ and $P_\tau = \Phi^*\chi_{(0,\tau]}(a)\Phi$.

For any Hilbert--Schmidt operator $H$ in $\mathcal H$, any set $\Omega\subset X$ of finite measure and any $\epsilon>0$, we apply the Schwarz inequality to find
\begin{align*}
\int_\Omega \tr_{\mathcal H} H^* \gamma(x,x) H \,dx & = \tr_{L_2(X,\mathcal H)} \chi_\Omega H^* \gamma H \chi_\Omega \\
& \leq (1+\epsilon) \tr_{L_2(X,\mathcal H)} \chi_\Omega H^* P_\tau \gamma P_\tau H \chi_\Omega \\
& \qquad + (1+\epsilon^{-1}) \tr_{L_2(X,\mathcal H)} \chi_\Omega H^* P_\tau^\bot \gamma P_\tau^\bot H \chi_\Omega \,.
\end{align*}
(Here $H \chi_\Omega$ is short for $\chi_\Omega\otimes H$ and $P_\tau^\bot$ for $1-P_\tau$.) For the first term on the right side, we notice that
$$
\tr_{L_2(X,\mathcal H)} \chi_\Omega H^* P_\tau \gamma P_\tau H \chi_\Omega = \int_\Omega \tr_{\mathcal H} H^* \gamma_\tau(x,x) H \,dx \,.
$$
In order to bound the second term we recall the fact that $\gamma\leq \Phi^* a\Phi$ and that $C=\|\Phi\|_{L_1\to L_\infty}<\infty$, which yields
\begin{align*}
\tr_{L_2(X,\mathcal H)} \chi_\Omega H^* P_\tau^\bot \gamma P_\tau^\bot H \chi_\Omega
& \leq \tr_{L_2(X,\mathcal H)} \chi_\Omega H^*  \Phi^* a \chi_{\{a>\tau\}} \Phi H \chi_\Omega \\
& = \int_\Omega \int_Y \tr_\mathcal H H^* \Phi(y,x)^* a(y) \chi_{\{a(y)>\tau\}} \Phi(y,x) H \,dy\,dx \\
& \leq \int_\Omega \int_Y \| a(y) \chi_{\{a(y)>\tau\}} \|_{\B} \| \Phi(y,x) \|_{\B}^2 \tr_\mathcal H H^* H \,dy\,dx \\
& \leq |\Omega|\, C^2 \tr_{\mathcal H} H^* H \ \int_Y \| a(y)\|_{\B} \ \chi_{\{\|a(y)\|_{\B}>\tau\}} \,dy \,.
\end{align*}
Here we used the fact that $\| a(y) \chi_{\{a(y)>\tau\}} \|_{\B}= \| a(y)\|_{\B} \, \chi_{\{\|a(y)\|_{\B}>\tau\}}$. Now the same weak $L_p$ bound as in the proof of Lemma \ref{rumin} leads to
$$
\tr_{L_2(X,\mathcal H)} \chi_\Omega H^* P_\tau^\bot \gamma P_\tau^\bot H \chi_\Omega
\leq |\Omega|\, C^2 \tr_{\mathcal H} H^* H \ \frac p{p-1} \| a\|_{L_{p,w}(\B)}^p \tau^{-p+1} \,.
$$
To summarize, we have shown that
\begin{align*}
\int_\Omega \tr_{\mathcal H} H^* \gamma(x,x) H \,dx \leq & (1+\epsilon) \int_\Omega \tr_{\mathcal H} H^* \gamma_\tau(x,x) H \,dx \\
& + (1+\epsilon^{-1}) |\Omega|\, C^2 \tr_{\mathcal H} H^* H \ \frac p{p-1} \| a\|_{L_{p,w}(\B)}^p \tau^{-p+1} \,.
\end{align*}
Since this is valid for any $\Omega$ and for any $H$, we have for a.e. $x\in X$ the operator inequality
$$
\gamma(x,x) \leq (1+\epsilon) \gamma_\tau(x,x) + (1+\epsilon^{-1}) C^2 \frac p{p-1} \| a\|_{L_{p,w}(\B)}^p \tau^{-p+1} \,.
$$
We now use the fact that an operator inequality $A\geq B$ implies $\tr f(A)\geq \tr f(B)$ for $f$ non-decreasing. In our case $f(t)=t_+$, the positive part, and therefore
$$
\tr_{\mathcal H} \gamma_\tau(x,x) \geq (1+\epsilon)^{-1} \tr_{\mathcal H} \left( \gamma(x,x) - (1+\epsilon^{-1}) C^2 \frac p{p-1} \| a\|_{L_{p,w}(\B)}^p \tau^{-p+1} \right)_+ \,.
$$
It remains to do the $\tau$ integration,
$$
\int_0^\infty \tr_{\mathcal H} \gamma_\tau(x,x) \,\frac{d\tau}{\tau^2} \geq 
\frac{\epsilon^{1/(p-1)}}{(1+\epsilon)^{p/(p-1)}}
\left( \frac{p-1}{p} \right)^{p/(p-1)} C^{-2/(p-1)} \| a\|_{L_{p,w}(\B)}^{-p'} \tr_{\mathcal H} \gamma(x,x)^{p'} \,,
$$
and to optimize in $\epsilon$ by choosing $\epsilon=(p-1)^{-1}$. This, together with \eqref{eq:reprop} proves \eqref{eq:cwikelopgoal2} and completes the proof.
\end{proof}


\subsection{The CLR inequality for general Schr\"odinger-like operators}

Next, we show that for a large class of `kinetic energies' $T$ the number $N(0,T+V)$ of negative eigenvalues (counting multiplicities) of the Schr\"odinger-type operator $T+V$ can be bounded in terms of an integral of the potential $V$. We shall see how the exponent with which $V$ enters into this bound is determined by $T$. The improvement of this result as compared to those in \cite{LeSo,FrLiSe2} is that we do not require the potential to be scalar and that we do not require $\exp(-tT)$ to be positivity preserving.

Again, throughout this subsection we assume that $X$ is a sigma-finite measure space and $\mathcal{H}$ a separable Hilbert spaces.

\begin{theorem}
\label{cwikelgen}
Let $T$ be a non-negative operator in $L_2(X,\mathcal H)$ with $\ker T=\{0\}$. Assume that there are constants $\nu>2$ and $A<\infty$ such that for every $E>0$, every $\Omega\subset X$ of finite measure and every $\phi\in\mathcal H$,
\begin{equation}
\label{eq:ass}
\tr_{L_2(X)} \chi_\Omega \left(\phi, T^{-1} \chi_{(0,E]}(T) \phi \right)_{\mathcal H} \chi_\Omega \leq A E^{(\nu-2)/2} |\Omega| \|\phi\|_{\mathcal H}^2 \,. 
\end{equation}
Then for any measurable function $V$ on $X$, taking values in the self-adjoint compact operators on $\mathcal H$,
$$
N(0,T+V) \leq C_\nu \, A \int_X \tr_{\mathcal{H}} V(x)_-^{\nu/2} \,dx
$$
with
$$
C_\nu = \frac\nu 2 \left( \frac{\nu}{\nu-2} \right)^{\nu-2} \,.
$$
If $\dim\mathcal H=1$, then $C_\nu$ can be replaced by
$$
C_\nu = \left( \frac{\nu(\nu+2)}{(\nu-2)^2} \right)^{(\nu-2)/2} \,.
$$
\end{theorem}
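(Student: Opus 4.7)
The plan is to follow the two-step strategy of the introduction with $-\Delta$ replaced by $T$. First I establish a generalized Rumin-type inequality: for every operator $0\leq\gamma\leq T^{-1}$ on $L_2(X,\mathcal H)$,
$$
\tr\gamma^{1/2}T\gamma^{1/2}\geq K\int_X\tr_{\mathcal H}\gamma(x,x)^{\nu/(\nu-2)}\,dx
$$
with $K^{-(\nu-2)/2}=C_\nu A$. Granted this, the derivation at the end of the introduction adapts directly: let $\psi_1,\dots,\psi_N$ span the spectral subspace of $T+V$ in $(-\infty,-\epsilon)$, normalized so that $\langle T^{1/2}\psi_j,T^{1/2}\psi_k\rangle=\delta_{jk}$. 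Then $\gamma=\sum_j|\psi_j\rangle\langle\psi_j|$ satisfies $\gamma\leq T^{-1}$, $\tr\gamma^{1/2}T\gamma^{1/2}=N$, and $0\geq N+\tr V\gamma$. Estimating $\tr V\gamma\geq-\int_X\tr_{\mathcal H}V_-(x)\gamma(x,x)\,dx$ and applying H\"older first in the fiberwise Schatten exponents $p=\nu/2$, $p'=\nu/(\nu-2)$, and then in the $x$-integral with the same exponents, combined with the Rumin inequality above, yields $N\leq K^{-(\nu-2)/2}\int_X\tr_{\mathcal H}V_-^{\nu/2}\,dx$ independently of $\epsilon$.

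For the Rumin-type inequality I adapt the argument of Theorem \ref{cwikelop}. Using the representation $T=\int_0^\infty\chi_{[1/\tau,\infty)}(T)\,\tau^{-2}\,d\tau$,
$$
\tr\gamma^{1/2}T\gamma^{1/2}=\int_X\int_0^\infty\tr_{\mathcal H}\gamma_\tau(x,x)\,\frac{d\tau}{\tau^2}\,dx,\qquad\gamma_\tau:=P_\tau\gamma P_\tau,\ P_\tau:=\chi_{[1/\tau,\infty)}(T).
$$
For a Hilbert--Schmidt operator $H$ on $\mathcal H$, a set $\Omega\subset X$ of finite measure and $\epsilon>0$, the operator Schwarz inequality applied to the splitting $\gamma^{1/2}H\chi_\Omega=\gamma^{1/2}P_\tau H\chi_\Omega+\gamma^{1/2}P_\tau^\bot H\chi_\Omega$ gives
$$
\int_\Omega\tr_{\mathcal H}H^*\gamma(x,x)H\,dx\leq(1+\epsilon)\int_\Omega\tr_{\mathcal H}H^*\gamma_\tau(x,x)H\,dx+(1+\epsilon^{-1})\tr\chi_\Omega H^*P_\tau^\bot\gamma P_\tau^\bot H\chi_\Omega.
$$
Since $\gamma\leq T^{-1}$ and $P_\tau^\bot=\chi_{(0,1/\tau)}(T)$ commutes with $T$, the last term is bounded by $\tr\chi_\Omega H^*T^{-1}\chi_{(0,1/\tau)}(T)H\chi_\Omega$; diagonalizing $HH^*=\sum_k\lambda_k|\phi_k\rangle\langle\phi_k|$ and invoking cyclicity of the trace reduces this to $\sum_k\lambda_k\tr_{L_2(X)}\chi_\Omega(\phi_k,T^{-1}\chi_{(0,1/\tau]}(T)\phi_k)_{\mathcal H}\chi_\Omega$, to which assumption \eqref{eq:ass} with $E=1/\tau$ applies, yielding the bound $A\tau^{-(\nu-2)/2}|\Omega|\tr_{\mathcal H}H^*H$. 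Since $\Omega$ and $H$ are arbitrary, one obtains the pointwise operator inequality
$$
\gamma(x,x)\leq(1+\epsilon)\gamma_\tau(x,x)+(1+\epsilon^{-1})A\tau^{-(\nu-2)/2}I_{\mathcal H}\qquad\text{a.e.\ }x\in X.
$$
Applying $\tr_{\mathcal H}(\,\cdot\,)_+$ (non-decreasing in the operator order) and then integrating in $\tau$ via spectral calculus uses the elementary identity $\int_0^\infty(a-c\tau^{-(\nu-2)/2})_+\tau^{-2}\,d\tau=\frac{\nu-2}{\nu}c^{-2/(\nu-2)}a^{\nu/(\nu-2)}$ on each eigenvalue; optimizing in $\epsilon$ (the minimum is at $\epsilon=2/(\nu-2)$) gives exactly $K^{-(\nu-2)/2}=\frac{\nu}{2}(\nu/(\nu-2))^{\nu-2}A$.

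When $\dim\mathcal H=1$ the Cauchy--Schwarz split is unnecessary: the scalar triangle inequality for the Hilbert--Schmidt norm, exactly as in Lemma \ref{rumin}, delivers the sharper pointwise estimate $\rho(x)^{1/2}\leq\rho_\tau(x)^{1/2}+A^{1/2}\tau^{-(\nu-2)/4}$ almost everywhere, whose direct integration yields the improved constant $(\nu(\nu+2)/(\nu-2)^2)^{(\nu-2)/2}$. The main technical point is the passage from the integrated Schwarz bound to the pointwise operator inequality on $\gamma(x,x)$ in the operator-valued case: one must perform the cyclicity manipulations on $L_2(X,\mathcal H)$ with care, since the factors $1\otimes H$, $\chi_\Omega$ and $T$ need not commute, so that the diagonal hypothesis \eqref{eq:ass} applies fiberwise to the auxiliary vectors $\phi_k$ diagonalizing $HH^*$.
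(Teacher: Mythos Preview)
The proposal is correct and essentially matches the paper's proof. The paper routes the passage from the Rumin-type inequality to the CLR bound through Birman--Schwinger together with Lemmas \ref{dual} and \ref{equiv}, whereas you use the direct variational argument from the introduction; both yield exactly the constant $K^{-(\nu-2)/2}=C_\nu A$. Your proof of the Rumin-type inequality itself coincides with the paper's (the integration variable $\tau=1/E$ instead of $E$ is the only cosmetic difference), including the operator Schwarz split with optimization at $\epsilon=2/(\nu-2)$ for general $\mathcal H$ and the square-root triangle inequality for $\dim\mathcal H=1$.
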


Roughly speaking, assumption \eqref{eq:ass} means that $T^{-1} \chi_{(0,E]}(T)$ has an integral kernel (taking values in the bounded operators on $\mathcal H$) which on the diagonal satisfies the bound
$$
\| T^{-1} \chi_{(0,E]}(T)(x,x) \|_{\mathfrak B(\mathcal H)} \leq A E^{(\nu-2)/2}
$$
We discuss the equivalence of this assumption with more standard assumptions in Lemma \ref{ass} below.

Before turning to the proof of Theorem \ref{cwikelgen} we illustrate it by

\begin{example}
Let $T=(-\Delta)^s$, $0<s<d/2$, in $L_2(\R^d)$. Then by explicit diagonalization via Fourier transform one sees that \eqref{eq:ass} holds with $\nu=d/s$ and
$$
A= \int_{\R^d} |p|^{-2s} \chi_{\{|p|<1\}} \frac{dp}{(2\pi)^d} 
= \frac{\omega_d}{(2\pi)^{d}} \frac{d}{d-2s} \,.
$$
Thus Theorem \ref{cwikelgen} implies that
\begin{equation}
\label{eq:clrrdop}
N(0,(-\Delta)^s +V) \leq \frac{d}{2s} \left( \frac{d}{d-2s} \right)^{(d-2s)/s} \frac{\omega_d}{(2\pi)^{d}} \frac{d}{d-2s} \int_{\R^d} \tr_\mathcal H V(x)_-^{d/2s}\,dx
\end{equation}
in the operator-valued case and
\begin{equation}
\label{eq:clrrdscal}
N(0,(-\Delta)^s +V) \leq \left( \frac{d (d+2s)}{(d-2s)^2} \right)^{(d-2s)/2s} \frac{\omega_d}{(2\pi)^{d}} \frac{d}{d-2s} \int_{\R^d} V(x)_-^{d/2s}\,dx
\end{equation}
in the scalar case. These constants are rather good. In the cases which are most relevant in applications the bounds are about a factor of two worse than the best available bounds. Indeed, for $d=3$, \eqref{eq:clrrdscal} gives 0.196 for $s=1$ (to be compared with 0.116 from \cite{Li}) and 0.228 for $s=1/2$ (to be compared with 0.103 from \cite{Da}) and \eqref{eq:clrrdop} gives 0.228 for $s=1$ (to be compared with 0.174 from \cite{FrLiSe1}). We emphasize again that the methods of \cite{Li,Da,FrLiSe1} are restricted to $s\leq 1$. The above constants are the best ones available for $1<s<d/2$; see the comparison with the constant from \cite{Cw,H1} after Theorem \ref{cwikelop}.
\end{example}

\begin{proof}
By the variational principle and the Birman--Schwinger principle,
$$
N(0,T+V) \leq N(0,T-V_-) = n(1,T^{-1/2} V_- T^{-1/2}) \,.
$$
Thus, by the same argument as in the proof of Lemma \ref{equiv}, Theorem \ref{cwikelgen} will follow if we can show that
$$
\tr \left( T^{-1/2} V_- T^{-1/2} -\mu \right)_+ \leq \mu^{-\nu/2 +1} A D \int_X \tr_\mathcal H V(x)_-^{\nu/2} \,dx \,.
$$
Here, $D= (\nu/(\nu-2))^{(\nu-2)/2}$ in the general case, which can be improved to $D=(2/\nu)((\nu+2)/(\nu-2))^{(\nu-2)/2}$ for $\dim\mathcal H=1$. By the argument of Lemma \ref{dual} the latter inequality is, in turn, equivalent to the inequality
$$
\tr \gamma^{1/2} T \gamma^{1/2} \geq A^{-2/(\nu-2)} K \int_X \tr_{\mathcal H} \gamma(x,x)^{\nu/(\nu-2)} \,dx
$$
for every operator $0\leq\gamma\leq T^{-1}$. Here
$$
K= \frac{2^{2/(\nu-2)}(\nu-2)^2}{\nu^{2(\nu-1)/(\nu-2)}}
$$
in the general case, which can be improved to $K=(\nu-2)^2/(\nu(\nu+2))$ for $\dim\mathcal H=1$. In the scalar case $\dim\mathcal H=1$, this bound follows from \cite{Ru1} (with the improved constant of \cite{Ru}) and the modifications to treat the general case are similar to our arguments in the proof of Theorem \ref{cwikelop}.

For the sake of completeness, we briefly sketch the proof. We introduce $P_E=\chi_{(E,\infty)}(T)$ and $P_E^\bot=\chi_{(0,E]}(T)$. The key is, as before, the bound
$$
\tr_{ L_2(X,\mathcal H)} \chi_\Omega H^* P_E^\bot \gamma P_E^\bot H \chi_\Omega
\leq \tr_{ L_2(X,\mathcal H)} \chi_\Omega H^* P_E^\bot T^{-1} P_E^\bot H \chi_\Omega
$$
for any set $\Omega\subset X$ of finite measure and any Hilbert--Schmidt operator $H$ on $\mathcal H$. By assumption \eqref{eq:ass} the right side is bounded by $A E^{(\nu-2)/2} |\Omega| \tr_{\mathcal H} H^*H$. This implies, as before,
$$
\gamma(x,x) \leq (1+\epsilon) \left( P_E\gamma P_E \right) (x,x) + \left(1+\epsilon^{-1}\right) A E^{(\nu-2)/2}
$$
for every $\epsilon>0$. In the special case $\dim\mathcal H=1$ the bound can be somewhat improved using the argument of Lemma \ref{rumin} to
$$
\sqrt{\gamma(x,x)} \leq \sqrt{\left( P_E\gamma P_E \right) (x,x)} + A^{1/2} E^{(\nu-2)/4} \,.
$$
With these bounds at hand the proof is completed as before by integration over $E$.
\end{proof}

We now give sufficient conditions for assumption \eqref{eq:ass}, which can be verified in applications. Similar results are contained in \cite{Ru1}.

\begin{lemma}
\label{ass}
Let $T$ be a non-negative operator in $L_2(X,\mathcal H)$, let $\Omega\subset X$ have finite measure and let $\phi\in\mathcal H$.
If, for some constants $\nu>0$ and $C'$ and all $t>0$
\begin{equation}
\label{eq:assheat}
\tr_{L_2(X)} \chi_\Omega \left(\phi, \exp(-tT) \phi \right)_{\mathcal H} \chi_\Omega \leq C' t^{-\nu/2} \,,
\end{equation}
then for all $E>0$
\begin{equation}
\label{eq:assproj}
\tr_{L_2(X)} \chi_\Omega \left(\phi, \chi_{(0,E]}(T) \phi \right)_{\mathcal H} \chi_\Omega \leq B' E^{\nu/2}
\end{equation}
with $B'=C'( 2e/\nu)^{\nu/2}$. Moreover, if \eqref{eq:assproj} holds for some constants $\nu>2$ and $B'$ and all $E>0$, then for all $E>0$
\begin{equation}
\label{eq:asslem}
\tr_{L_2(X)} \chi_\Omega \left(\phi, T^{-1} \chi_{(0,E]}(T) \phi \right)_{\mathcal H} \chi_\Omega \leq A' E^{(\nu-2)/2}
\end{equation}
with $A'=B'\nu/(\nu-2)$.
\end{lemma}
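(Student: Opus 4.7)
The lemma consists of two independent implications, and my plan is to prove each by a spectral-calculus identity followed by a short one-parameter optimization.

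For the first implication, I would start from the pointwise bound $\chi_{(0,E]}(\lambda) \leq e^{t(E-\lambda)}$, valid for $\lambda \geq 0$ and $t > 0$, which via the spectral theorem gives the operator inequality $\chi_{(0,E]}(T) \leq e^{tE} \exp(-tT)$. Sandwiching this between $\chi_\Omega (\phi, \cdot\, \phi)_{\mathcal H}\chi_\Omega$, taking the $L_2(X)$-trace, and inserting the hypothesis \eqref{eq:assheat} yields
$$
\tr_{L_2(X)} \chi_\Omega \left(\phi, \chi_{(0,E]}(T)\phi\right)_{\mathcal H} \chi_\Omega \;\leq\; C' e^{tE} t^{-\nu/2}
$$
for every $t > 0$. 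Minimizing the right-hand side at the critical point $t = \nu/(2E)$ produces $C'(2eE/\nu)^{\nu/2}$, which is $B' E^{\nu/2}$ with $B' = C'(2e/\nu)^{\nu/2}$, as claimed.

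For the second implication, the plan is to invoke the layer-cake representation
$$
T^{-1} = \int_0^\infty \chi_{(0,t]}(T)\, \frac{dt}{t^2},
$$
which follows from $\int_\lambda^\infty t^{-2}\, dt = \lambda^{-1}$ for $\lambda > 0$ and the spectral theorem (on the strictly positive part of the spectrum, which is where the factor $\chi_{(0,E]}(T)$ confines us). Since $\chi_{(0,t]}(T) \chi_{(0,E]}(T) = \chi_{(0,\min\{t,E\}]}(T)$, applying the non-negative linear functional $\tr_{L_2(X)} \chi_\Omega (\phi, \cdot\, \phi)_{\mathcal H} \chi_\Omega$ and exchanging trace and integral by Tonelli gives
$$
\tr_{L_2(X)} \chi_\Omega \left(\phi, T^{-1}\chi_{(0,E]}(T)\phi\right)_{\mathcal H} \chi_\Omega \;=\; \int_0^\infty F(\min\{t,E\})\, \frac{dt}{t^2},
$$
where $F(E)$ denotes the left-hand side of \eqref{eq:assproj}. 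Splitting the integral at $t = E$ and using $F(s) \leq B' s^{\nu/2}$, the portion over $t \in (0,E]$ equals $2B' E^{(\nu-2)/2}/(\nu-2)$ while the portion over $t \in [E,\infty)$ equals $F(E) \cdot E^{-1} \leq B' E^{(\nu-2)/2}$. Adding the two yields $B'\nu/(\nu-2)\cdot E^{(\nu-2)/2}$, which is the stated bound.

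I do not anticipate a serious obstacle. The only subtlety worth flagging is that the finiteness of $\int_0^E t^{\nu/2 - 2}\, dt$ at the origin requires $\nu > 2$, which is precisely the hypothesis invoked in the second part, and the Tonelli exchange of trace and integral is legitimate because every spectral projection in sight is non-negative.
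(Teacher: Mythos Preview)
Your proof is correct and follows essentially the same approach as the paper's own proof: the same exponential bound $\chi_{(0,E]}(\lambda)\leq e^{tE}e^{-t\lambda}$ optimized at $t=\nu/(2E)$ for the first part, and the same layer-cake identity $\lambda^{-1}\chi_{(0,E]}(\lambda)=\int_0^\infty \chi_{(0,\min\{s,E\}]}(\lambda)\,s^{-2}\,ds$ for the second. Your split of the integral at $t=E$ reproduces exactly the paper's evaluation $\int_0^\infty \min\{s,E\}^{\nu/2}\,s^{-2}\,ds = \nu E^{(\nu-2)/2}/(\nu-2)$.
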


\begin{proof}
To prove the first assertion of the lemma we use the bound $\chi_{(0,E]}(\lambda) \leq e^{t E} e^{-t\lambda}$. Thus \eqref{eq:assheat} implies
$$
\tr_{L_2(X)} \chi_\Omega \left(\phi, \chi_{(0,E]}(T) \phi \right)_{\mathcal H} \chi_\Omega \leq C' t^{-\nu/2} e^{t E}
$$
for all $t>0$. We optimize the right side by choosing $t=\nu/(2E)$.

To prove the second assertion we write
$$
\lambda^{-1} \chi_{(0,E]}(\lambda) = \int_0^\infty \chi_{(0,\min\{s,E\}]}(\lambda) \frac{ds}{s^2} \,.
$$
Thus \eqref{eq:assproj} implies
$$
\tr_{L_2(X)} \chi_\Omega \left(\phi, T^{-1} \chi_{(0,E]}(T) \phi \right)_{\mathcal H} \chi_\Omega
\leq B' \int_0^\infty \min\{s,E\}^{\nu/2} \frac{ds}{s^2}
= B' \frac{\nu}{\nu-2} E^{(\nu-2)/2} \,,
$$
as claimed.
\end{proof}

Assumption \eqref{eq:assheat} is a standard assumption in works on ultra-contractivity. In the work of Levin and Solomyak \cite{LeSo} (see also \cite{FrLiSe2}) it was used to extend the proof of Li and Yau \cite{LiYa} to general Dirichlet forms generating submarkovian semi-groups. The important difference, however, is that here, as in \cite{Ru1,Ru}, we do \emph{not} need the heat kernel to be positivity preserving and a contraction on $L_1$.

One application of Lemma \ref{ass} concerns magnetic Schr\"odinger operators. That is, take $X=\R^d$, $\mathcal H=\C$ and $T=(-i\nabla +A)^2$ for some $A\in L_{2,\loc}(\R^d,\R^d)$. While we do not know how to verify \eqref{eq:ass} directly, we know from the diamagnetic inequality that \eqref{eq:assheat} holds with $C'= (4\pi)^{-d/2} |\Omega| \|\phi\|_{\mathcal H}^2$. Thus, in dimension $d\geq 3$, \eqref{eq:asslem} holds with $A'= (e/(2\pi d))^{d/2} (d/(d-2)) |\Omega| \|\phi\|_{\mathcal H}^2$ and \eqref{eq:ass} with $A= (e/(2\pi d))^{d/2} (d/(d-2))$. While this constant is worse than that without magnetic field, it is independent of the magnetic field, as it should be.


\section{Concluding remarks}\label{sec:concl}

In this final subsection we discuss the problem of finding the optimal (i.e., largest possible) constant $K_{s,d}$ in Rumin's inequality
\begin{equation}
\label{eq:const}
\tr\gamma^{1/2}(-\Delta)^s\gamma^{1/2} \geq K_{s,d} \int_{\R^d} \gamma(x,x)^{d/(d-2s)} \,dx
\end{equation}
for operators $\gamma$ on $L_2(\R^d)$ satisfying $0\leq\gamma\leq(-\Delta)^{-s}$. We assume throughout that $2s<d$.

Lemma \ref{rumin} (with $a(\xi)=|\xi|^{-2s}$ and $p=d/2s$) implies that this inequality holds and that the optimal constant satisfies
$$
K_{s,d} \geq \frac{d-2s}{d+2s} \ (2\pi)^{2ds/(d-2s)} \left( \frac{d-2s}{d} \right)^{d/(d-2s)} \omega_d^{-2s/(d-2s)}
$$
Here $\omega_d=|\{\xi\in\R^d:\ |\xi|<1\}|$. In the following subsections we derive two upper bounds for $K_{s,d}$ and discuss a non-obvious symmetry.


\subsection{The semi-classical constant}

Here we show that
\begin{equation}
\label{eq:constbdsc}
K_{s,d} \leq (2\pi)^{2ds/(d-2s)} \left( \frac{d-2s}{d} \right)^{d/(d-2s)} \omega_d^{-2s/(d-2s)} \,.
\end{equation}
Note that this upper bound differs from the constant in Lemma \ref{rumin} only by a factor of $(d-2s)/(d+2s)$. There are two ways to prove \eqref{eq:constbdsc}. The first one consists in noting that a Weyl-type semi-classical formula yields a lower bound on the optimal constant $D_{s,d}$ in the inequality
$$
\tr\left((-\Delta)^{-s/2} V_- (-\Delta)^{-s/2} - \mu \right)_+ \leq D_{s,d} \ \mu^{-d/2s+1} \int_{\R^d} V(x)_-^{d/2s} \,dx
$$
and then using Lemma \ref{dual} to convert this into an upper bound on $K_{s,d}$. Since this is standard, we explain a less known, but more direct approach. Instead of finding the best constant $K_{s,d}$ in \eqref{eq:const} we look for the best constant $K_{s,d}'$ in the inequality
\begin{equation}
\label{eq:constsc}
\iint_{\R^d\times\R^d}  |p|^{2s} M(p,x) \frac{dp\,dx}{(2\pi)^d}
\geq K_{s,d}' \int_{\R^d} \left( \int_{\R^d} M(p,x) \frac{dp}{(2\pi)^{d}} \right)^{d/(d-2s)} \,dx
\end{equation}
for all functions $M$ on $\R^d\times\R^d$ satisfying $0\leq M(p,x) \leq |p|^{-2s}$ for all $x$ and $p$. Using coherent states it is easy to verify that $K_{s,d}\leq K_{s,d}'$. It is elementary to compute the optimal constant $K_{s,d}'$. It is given by the right side of \eqref{eq:constbdsc}. Optimizers $M$ are of the form $M(p,x) = |p|^{-2s}\chi_{\{|p|<R(x)\}}$ for an arbitrary function $R$.


\subsection{The Sobolev constant}

Applying \eqref{eq:const} to an operator $\gamma=\alpha |\psi\rangle\langle\psi|$ of rank one with $\alpha = \|(-\Delta)^{s/2}\psi\|^{-2}$ we obtain
\begin{equation}
\label{eq:sob}
\left\|(-\Delta)^{s/2}\psi\right\|^2 \geq K_{s,d}^{(d-2s)/d} \left( \int_{\R^d} |\psi|^{2d/(d-2s)} \,dx \right)^{(d-2s)/d} \,.
\end{equation}
This is Sobolev's inequality. The best constant in this inequality for general $s$ has been determined by Lieb \cite{Li2} (in a dual formulation). Using this  value, we infer that
\begin{equation}
\label{eq:constbdsob}
K_{s,d} \leq (4\pi)^{ds/(d-2s)} \left( \frac{\Gamma((d+2s)/2)}{\Gamma((d-2s)/2)} \right)^{d/(d-2s)} \left( \frac{\Gamma(d/2)}{\Gamma(d)} \right)^{2s/(d-2s)} \,.
\end{equation}
Numerically, it is easy to determine which one of the upper bounds \eqref{eq:constbdsc} and \eqref{eq:constbdsob} is better. It seems like \eqref{eq:constbdsob} is better for $d=1$ and \eqref{eq:constbdsc} is better for $d\geq 3$. In $d=2$, \eqref{eq:constbdsob} is better for $s<1/2$ and \eqref{eq:constbdsc} is better for $s>1/2$. We also remark that the constants on the right sides of \eqref{eq:constbdsc} and \eqref{eq:constbdsob} are asymptotically equal as $s\to 0$ and as $s\to d/2$.


\subsection{Conformal invariance}

Lieb \cite{Li2} has shown that \eqref{eq:sob} (or an equivalent version thereof) is conformally invariant in the following sense. If $h$ is a conformal transformation of $\R^d\cup\{\infty\}$ and if $\phi(x) = J_h(x)^{(d-2s)/2d} \psi(h(x))$, where $J_h$ is the Jacobian of $h$, then
$$
\left\|(-\Delta)^{s/2}\phi\right\|^2 = \left\|(-\Delta)^{s/2} \psi\right\|^2
\qquad\text{and}\qquad
\int_{\R^d} |\phi|^{2d/(d-2s)} \,dx = \int_{\R^d} |\psi|^{2d/(d-2s)} \,dx \,.
$$

Similarly, we now argue that \eqref{eq:const} is conformally invariant
under replacing $\gamma(x,y)$ by $J_h(x)^{(d-2s)/2d} \gamma(h(x),h(y)) J_h(y)^{(d-2s)/2d}$. We first observe that \eqref{eq:const} is equivalent to the following inequality. For any sequence of functions $(\psi_j)\subset \dot H^s(\R^d)$ satisfying $\langle(-\Delta)^{s/2}\psi_j,(-\Delta)^{s/2}\psi_k\rangle = \delta_{j,k}$ and for any sequence of numbers $(\lambda_j)$ satisfying $0\leq\lambda_j\leq 1$, we have
$$
\sum_j \lambda_j \geq K_{s,d} \int_{\R^d} \left( \sum_j \lambda_j |\psi_j|^2 \right)^{d/(d-2s)} \,dx \,.
$$
This equivalence follows by expanding the trace class operator $(-\Delta)^{s/2}\gamma(-\Delta)^{s/2}=\sum_j \lambda_j |f_j\rangle\langle f_j|$ into its eigenfunctions and setting $\psi_j=(-\Delta)^{-s/2}f_j$.

If we now let $\phi_j(x) = J_h(x)^{(d-2s)/2d} \psi_j(h(x))$, then, by polarization of the above identity,
$$
\langle(-\Delta)^{s/2}\phi_j,(-\Delta)^{s/2}\phi_k\rangle
= \langle(-\Delta)^{s/2}\psi_j,(-\Delta)^{s/2}\psi_k\rangle \,,
$$
and clearly
$$
\int_{\R^d} \left( \sum_j \lambda_j |\phi_j|^2 \right)^{d/(d-2s)} \,dx = \int_{\R^d} \left( \sum_j \lambda_j |\psi_j|^2 \right)^{d/(d-2s)} \,dx \,.
$$
This proves that Rumin's inequality \eqref{eq:const} is invariant under replacing $\gamma(x,y)$ by $J_h(x)^{(d-2s)/2d} \gamma(h(x),h(y)) J_h(y)^{(d-2s)/2d}$ for any conformal transformation.

One consequence of this conformal invariance is that the inequality has an equivalent formulation on the sphere $\Sph^d$ via stereographic projection as in \cite{Li2}. In light of previous results about conformally invariant trace inequalities \cite{Mo} it is natural to wonder about the sharp constant in \eqref{eq:const}.


\bibliographystyle{amsalpha}

\end{document}